\DeclareMathOperator{\SO}{SO}
\DeclareMathOperator{\HH}{H}
\DeclareMathOperator{\Gm}{{\mathbf G}_m}
\DeclareMathOperator{\Aut}{Aut\,}
\DeclareMathOperator{\id}{id}
\DeclareMathOperator{\Str}{Str}
\DeclareMathOperator{\PG}{PG}
\DeclareMathOperator{\chr}{char}
\DeclareMathOperator{\et}{\text{\it \'et}}
\newtheorem{lem}{Lemma}
\newtheorem*{thm*}{Theorem}
\newtheorem{thm}{Theorem}
\newtheorem*{cor*}{Corollary}
\title{Grothendieck---Serre conjecture for groups of type $F_4$ with trivial $f_3$ invariant}
\thanks{V. Petrov is supported by Max-Plank-Institut f\"ur Mathematik, RFBR~08-01-00756
and RFBR~09-01-90304; V. Petrov and
A. Stavrova are supported by RFBR~09-01-00878}
\author{V. Petrov}
\author{A. Stavrova}
\begin{document}

\begin{abstract}
Assume that $R$ is a semi-local regular ring containing an infinite perfect field.
Let $K$ be the field of fractions of $R$.
Let $H$ be a simple algebraic group of type $F_4$ over $R$ such that $H_K$ is the automorphism group
of a 27-dimensional Jordan algebra which is a first Tits construction. If $\chr K\neq 2$
this means precisely that the $f_3$ invariant of $H_K$ is trivial.
We prove that the kernel of the map
$$
\HH^1_{\et}(R,H)\to \HH^1_{\et}(K,H)
$$
induced by the inclusion of $R$ into $K$ is trivial.

This result is a particular case of the Grothendieck---Serre conjecture on rationally trivial torsors.
It continues the recent series of
papers~\cite{PaSV}, \cite{Pa-newpur}, \cite{PaPS} and complements the result of Chernousov \cite{Ch}
on the Grothendieck---Serre conjecture for groups of type $F_4$ with trivial $g_3$ invariant.
\end{abstract}

\maketitle

\section{Introduction}

In the present paper we address the Grothendieck---Serre conjecture~\cite[p. 31, Remarque]{Serre},~\cite[Remarque 1.11]{Gr}
on the rationally trivial torsors of reductive algebraic groups. This conjecture states that for any
reductive group scheme $G$ over a regular ring $R$, any $G$-torsor that is trivial over the field of fractions $K$
of $R$ is itself trivial; in other words, the natural map
$$
H^1_{\text{\'et}}(R,G)\to H^1_{\text{\'et}}(K,G)$$
has trivial kernel.
It has been settled in a variety of particular cases,
and we refer to~\cite{Pa-newpur} for a detailed overview.
The most recent result belongs to V. Chernousov~\cite{Ch} who has proved that
the Grothendieck---Serre conjecture holds for an arbitrary simple group $H$ of type $F_4$ over a local regular
ring $R$ containing the field of rational numbers, given that $H_K$ has a trivial $g_3$ invariant.
We prove that the Grothendieck---Serre conjecture holds for another natural class of groups $H$ of type $F_4$,
those for which $H_K$ has trivial $f_3$ invariant. In fact, since our approach is characteristic-free,
we establish the following slightly more general result.

\begin{thm}\label{th:main}
Let $R$ be a semi-local regular ring containing an infinite perfect field.
Let $K$ be the field of
fractions of $R$. Let $J$ be a 27-dimensional exceptional Jordan algebra over $R$ such that $J_K$ is a first Tits
construction. 
Then the map
$$
\HH^1_{\et}(R,\,\Aut(J)) \to \HH^1_{\et}(K,\,\Aut(J))
$$
induced by the inclusion of $R$ into $K$ has trivial kernel.
\end{thm}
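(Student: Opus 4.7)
The plan is to reduce to the known Grothendieck--Serre conjecture for groups of type $A_2$ together with a purity statement for reduced norms. Given a 27-dimensional Jordan $R$-algebra $J'$ representing a class in the kernel, so that $J'_K \cong J_K$, I would first show that $J'$ admits a first Tits presentation $J' \cong J(A', \mu')$ over $R$, with $A'$ an Azumaya $R$-algebra of degree $3$ and $\mu' \in R^\times$; do the same for $J$, obtaining $J \cong J(A, \mu)$; and then match the Azumaya algebras and the scalar classes separately.

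The first step is to promote the first Tits presentation of $J'_K$ to one over $R$. By hypothesis there is an Azumaya $K$-algebra $\mathcal{A}_K$ of degree $3$ and $\mu_K \in K^\times$ with $J'_K \cong J(\mathcal{A}_K, \mu_K)$. Purity for the Brauer group of a regular semi-local ring (Auslander--Goldman) allows one to extend the class of $\mathcal{A}_K$ to an Azumaya $R$-algebra $A'$ of degree $3$; I would realize $A'$ geometrically as the Jordan subalgebra of $J'$ attached to a cubic \'etale subalgebra, whose generic existence is propagated to $R$ via smoothness of the relevant subscheme of $\Aut(J')$. One then has to verify that $J'$ itself is obtained from $A'$ by the first Tits construction, i.e.\ that the $A'$-bimodule complement of $A'$ inside $J'$ is free of rank one and admits a generator $u$ with $u^3 = \mu' \in R^\times$. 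The same steps applied to $J$ give $J \cong J(A, \mu)$.

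Once both algebras are in first Tits form, the $K$-isomorphism $J_K \cong J'_K$ forces $A_K$ to be isomorphic either to $A'_K$ or to its opposite --- the latter case can be absorbed by inverting $\mu'$. The Grothendieck--Serre conjecture for $\PGL_3$-torsors on regular semi-local rings is classical, being a consequence of purity for Azumaya algebras, so $A \cong A'$ over $R$. It then remains to show that $\mu/\mu' \in \operatorname{Nrd}(A^\times)$: this holds over $K$ by construction, and the descent to $R$ is a purity statement for the functor $S \mapsto S^\times/\operatorname{Nrd}(A_S^\times)$ on regular semi-local $R$-algebras, of the type developed in the cited series \cite{PaSV}, \cite{Pa-newpur}, \cite{PaPS}.

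The main obstacle, as I see it, is the descent in the first step: showing that the \emph{first Tits} structure on $J'_K$ actually comes from one on $J'$. Purity for Brauer classes alone is not sufficient; one must show that the Azumaya $R$-algebra $A'$ embeds into $J'$ as a Jordan subalgebra with a free rank-one bimodule complement generated by an element cubing to a unit. I expect this to rely on the isotropy of $\Aut(J')$, which is guaranteed by the first Tits structure at the generic point, combined with the machinery for isotropic reductive group schemes developed in the cited series of papers.
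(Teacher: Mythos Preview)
Your strategy is genuinely different from the paper's, and the obstacle you yourself flag is fatal as you have stated the approach. You propose to resolve the descent of the first Tits presentation by invoking ``the isotropy of $\Aut(J')$, which is guaranteed by the first Tits structure at the generic point.'' But this is false in the only interesting case: for a first Tits construction one has $\pi_3(J_K)=0$, and then $J_K$ is either split or a division algebra (reduced with $\pi_3=0$ forces split). The split case is already covered by \cite{Pa-newpur}, so you may assume $J_K$ is division, and then $\Aut(J')_K$ is an \emph{anisotropic} form of $F_4$. Hence the isotropic-group machinery of \cite{PaSV,Pa-newpur,PaPS} does not apply to $\Aut(J')$ directly, and you have no mechanism to produce an Azumaya subalgebra $A'\subset J'$ over $R$ (your phrase ``the Jordan subalgebra attached to a cubic \'etale subalgebra'' also conflates the rank-$3$ \'etale piece with the rank-$9$ Azumaya piece). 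Without that embedding, the subsequent matching of $A$ with $A'$ and of $\mu$ with $\mu'$ never gets off the ground.

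The paper avoids this trap by changing the group: instead of $\Aut(J)$ it works with the structure group $\Str(J)$, which is a Levi subgroup of a $P_7$-parabolic in an adjoint group $G$ of type $E_7$. This $G$ is automatically isotropic, so \cite{Pa-newpur} gives trivial kernel for $\HH^1_{\et}(R,\Str(J))\to\HH^1_{\et}(K,\Str(J))$, whence $J'\simeq J^{(y)}$ for some invertible $y\in J$. The remaining work is to move $y$ into the $\Str(J)(R)$-orbit of $1$: one arranges that $y$ generates a cubic \'etale subalgebra $E$, computes the Springer form $q_E$ on $E^\perp$ via the discriminant of $E$, applies Grothendieck--Serre for $\SO(q)$ (again isotropic) to make $q_E$ isotropic over $R$, and then a $9$-dimensional first Tits subalgebra built from $E$ and an isotropic vector lets one bring $y$ to $1$ by explicit $U$-operators. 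Note that the paper never descends a first Tits presentation of the full $27$-dimensional $J$ to $R$; it only needs a small first Tits subalgebra containing $y$, which is much easier to produce.
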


\begin{cor*}
Let $R$ be a semi-local regular ring containing an infinite perfect field $k$ such that $\chr k\neq 2$.
Let $K$ be the field of
fractions of $R$. Let $H$ be a simple group scheme of type $F_4$ over $R$ such that
$H_K$ has trivial $f_3$ invarint.
Then
the map
$$
\HH^1_{\et}(R,\,H) \to \HH^1_{\et}(K,\,H)
$$
induced by the inclusion of $R$ into $K$ has trivial kernel.
\end{cor*}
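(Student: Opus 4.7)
The strategy is to reduce the Corollary to Theorem~\ref{th:main} via the classical equivalence between simple group schemes of type $F_4$ and $27$-dimensional exceptional Jordan algebras. The split simple $\ZZ$-group scheme $H_0$ of type $F_4$ is canonically isomorphic to the automorphism group scheme $\Aut(J_0)$ of the split $27$-dimensional exceptional Jordan algebra $J_0$. Since $F_4$ is simultaneously simply connected and adjoint and has no outer automorphisms, the pointed set $\HH^1_{\et}(R,H_0)$ classifies simultaneously the twisted forms of $J_0$ and the simple group schemes of type $F_4$ over $R$, and under the identification $H_0=\Aut(J_0)$ these two classifications are matched by $J\mapsto\Aut(J)$. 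In particular, for any simple group scheme $H$ of type $F_4$ over $R$, there exists a $27$-dimensional exceptional Jordan algebra $J$ over $R$ together with an $R$-isomorphism $H\cong\Aut(J)$, compatible with base change to $K$.

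Next I would translate the hypothesis on $f_3$. Since $H_K\cong\Aut(J_K)$, the cohomological invariant $f_3(H_K)$ coincides with the mod-$2$ invariant $f_3(J_K)$ of the Albert algebra $J_K$. A classical theorem of Rost (see, e.g., Serre's lectures on cohomological invariants or Petersson--Racine) asserts that in characteristic not $2$ one has $f_3(J_K)=0$ if and only if $J_K$ is a first Tits construction. Hence the hypothesis on $H$ in the Corollary is exactly the hypothesis on $J_K$ in Theorem~\ref{th:main}.

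To conclude, the $R$-isomorphism $H\cong\Aut(J)$ is functorial under base change, yielding a commutative square
\[
\xymatrix{
\HH^1_{\et}(R,H)\ar[r]\ar[d]_{\cong} & \HH^1_{\et}(K,H)\ar[d]^{\cong}\\
\HH^1_{\et}(R,\Aut(J))\ar[r] & \HH^1_{\et}(K,\Aut(J))
}
\]
in which the vertical maps are bijections of pointed sets induced by that isomorphism. Theorem~\ref{th:main} gives the triviality of the kernel of the bottom row, hence of the top row, which is precisely the assertion of the Corollary.

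The main subtlety in this reduction is producing the Jordan algebra $J$ over $R$ realizing the prescribed group $H$: over a field this correspondence is classical, but over a semi-local regular ring $R$ one must invoke the identification $H_0=\Aut(J_0)$ already at the level of $\ZZ$-group schemes (due to Springer--Veldkamp and others, and unproblematic under the assumption $\chr k\neq 2$) and combine it with faithfully flat descent applied to the classifying $H_0$-torsor of $H$. The matching of the invariants is then essentially definitional, since $f_3$ is read off in the same way from the common class in $\HH^1_{\et}(K,H_0)$ that presents both $H_K$ and $J_K$ as twisted forms.
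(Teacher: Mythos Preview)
Your proposal is correct and follows precisely the reduction the paper intends: the paper does not write a separate proof of the Corollary, but the two ingredients you use are exactly those it records in Sections~2 and~3, namely that every simple group scheme of type $F_4$ over $R$ is $\Aut(J)$ for a twisted form $J$ of $J_0$, and that in characteristic $\neq 2$ the condition $f_3(J_K)=0$ is equivalent to $J_K$ being a first Tits construction (via the equivalence of $f_3$ with the Pfister invariant $\pi_3$). Your commutative square just makes the passage explicit.
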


\section{Isotopes of Jordan algebras}

In the first two sections $R$ is an arbitrary commutative ring.

A \emph{(unital quadratic) Jordan algebra} is a projective $R$-module $J$
together with an element $1\in J$ and an operation
\begin{align*}
&J\times J\to J\\
&(x,\,y)\mapsto U_xy,
\end{align*}
which is quadratic in $x$ and linear in $y$ and satisfies the following axioms:
\begin{itemize}
\item $U_1=\id_J$;
\item $\{x,\,y,\,U_xz\}=U_x\{y,\,x,\,z\}$;
\item $U_{U_xy}=U_xU_yU_x$,
\end{itemize}
where $\{x,\,y,\,z\}=U_{x+z}y-U_xy-U_zy$ stands for the linearization of $U$.
It is well-known that the split simple group scheme of type $F_4$ can be realized
as the automorphism group scheme of the split $27$-dimensional exceptional
Jordan algebra $J_0$. This implies that any other group scheme of type $F_4$
is the automorphism group scheme of a twisted form of $J_0$.

Let $v$ be an \emph{invertible} element of $J$ (that is, $U_v$ is invertible).
An \emph{isotope} $J^{(v)}$ of $J$ is a new Jordan algebra whose underlying
module is $J$, while the identity and $U$-operator are given by the formulas
\begin{align*}
&1^{(v)}=v^{-1};\\
&U^{(v)}_x=U_xU_v.
\end{align*}
An \emph{isotopy} between two Jordan algebras $J$ and $J'$ is an isomorphism
$g\colon J\to J'^{(v)}$; it follows that $v=g(1)^{-1}$. We are particularly
interested in \emph{autotopies} of $J$; one can see that $g$ is an autotopy
if and only if
$$
U_{g(x)}=gU_xg^{-1}U_{g(1)}
$$
for all $x\in J$. In particular, transformations of the form $U_x$ are
autotopies. The group scheme of all autotopies is called the
\emph{structure group} of $J$ and is denoted by $\Str(J)$. Obviously it contains
$\Gm$ acting on $J$ by scalar transformations.

It is convenient to describe isotopies as isomorphisms of some algebraic
structures. This was done by O.~Loos who introduced the notion of a
\emph{Jordan pair}. We will not need the precise definition, see
\cite{Lo75} for details. It turns out that every Jordan algebra $J$ defines
a Jordan pair $(J,\,J)$, and the isotopies between $J$ and $J'$ bijectively
correspond to the isomorphims of $(J,\,J)$ and $(J',\,J')$
(\cite[Proposition~1.8]{Lo75}). In particular, the structure group
$\Str(J)$ is isomorphic to $\Aut((J,\,J))$.
We use this presentation of $\Str(J)$ to show that,
if $J$ is a $27$-dimensional exceptional Jordan algebra, $\Str(J)$ can be seen as a Levi
subgroup of a parabolic subgroup of type $P_7$ (with the enumeration of roots as in \cite{Bu}) in an adjoint group
of type $E_7$.
See also Garibaldi~\cite{Ga}.


\begin{lem}\label{lem:Str}
Let $J$ be a 27-dimensional exceptional Jordan algebra over a commutative ring $R$.
There exists
an adjoint simple group $G$ of type $E_7$ over $R$ such that $\Str(J)$ is isomorphic to a Levi subgroup
$L$ of a maximal parabolic subgroup $P$ of type $P_7$ in $G$.
\end{lem}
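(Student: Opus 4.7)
The plan is to reduce to the split case by a twisting argument, and in the split case to identify the structure group with the Levi via the Tits--Kantor--Koecher construction. Suppose first that the split case is known: there is a split adjoint group $G_0$ of type $E_7$ over $R$ containing a maximal parabolic $P_0$ of type $P_7$ whose Levi $L_0$ is $\Aut(J_0)$-equivariantly isomorphic to $\Str(J_0)$, where $J_0$ denotes the split Albert algebra and $\Aut(J_0) \hookrightarrow \Str(J_0)$ sits naturally inside $L_0 \hookrightarrow G_0$. Given an arbitrary $J$ over $R$, I would write $J = {}_c J_0$ for some cocycle $c \in Z^1_{\et}(R, \Aut(J_0))$, view $c$ as a $G_0$-valued cocycle via the above chain, and set $G := {}_c G_0$, $P := {}_c P_0$, $L := {}_c L_0$. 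Since twisting by a $G_0$-valued cocycle is inner, $G$ is an adjoint simple group of type $E_7$; $P$ is a maximal parabolic of type $P_7$ (types of parabolics being invariant under twisting); and $L$ is its Levi. Functoriality of twisting applied to $L_0 \cong \Str(J_0)$ then yields $L \cong \Str({}_c J_0) = \Str(J)$, completing the reduction.

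For the split case, the key input is Loos's identification $\Str(J_0) \cong \Aut((J_0, J_0))$ mentioned just before the lemma. The Tits--Kantor--Koecher construction attaches to the Jordan pair $(J_0, J_0)$ a $3$-graded Lie algebra $\mathfrak{g}_{-1} \oplus \mathfrak{g}_0 \oplus \mathfrak{g}_1$ with $\mathfrak{g}_{\pm 1} \cong J_0$, and for the split Albert algebra $\mathfrak{g}$ is of type $E_7$. The grading corresponds to a cocharacter $\Gm \to G_0$ in the split adjoint $E_7$ group, and since removing the node $\alpha_7$ from the Dynkin diagram of $E_7$ leaves the diagram of $E_6$ in Bourbaki's numbering, the associated parabolic is precisely $P_7$, with Levi having semisimple part of type $E_6$ and a one-dimensional center. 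The adjoint action of $L_0$ on $\mathfrak{g}_{\pm 1} \cong J_0$ identifies $L_0$ with the automorphism group scheme of the Jordan pair $(J_0, J_0)$, hence with $\Str(J_0)$.

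The principal obstacle is to carry out the split-case identification at the level of $R$-group schemes in a characteristic-free way. Over a field of characteristic zero this is entirely classical, but over an arbitrary $R$ one must realise all the players as smooth affine group schemes over $\ZZ$ in the Demazure--Chevalley sense and descend along $\Spec R \to \Spec \ZZ$. A clean strategy is to establish the isomorphism $L_0 \cong \Str(J_0)$ once and for all over $\ZZ$ --- either by explicit matching of root subgroups, or by constructing it over $\QQ$ and over each prime field and patching via formal smoothness --- and then base-change to $R$. This is essentially the content of the material in Garibaldi's paper~\cite{Ga} cited just before the lemma, which I would invoke rather than reproduce.
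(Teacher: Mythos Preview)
Your approach is sound but takes a genuinely different route from the paper. The paper works directly with the given $J$, with no reduction to the split case: it invokes Loos's results \cite[Theorem~4.6 and Lemma~4.11]{Lo78}, which for any Jordan algebra $J$ produce a reductive group scheme $\PG(J)$ together with a parabolic $P$ whose Levi is $\Aut((J,J))\cong\Str(J)$, and moreover satisfy $\PG(J)\cong\Aut(\PG(J)/P)$. For an Albert algebra this group is of type $E_7$ and $P$ is of type $P_7$; Demazure's theorem on automorphisms of flag varieties \cite{Dem77} then identifies $\Aut(\PG(J)/P)$ with the adjoint group $G$ of type $E_7$ (since $E_7$ has no outer automorphisms), so $\PG(J)\cong G$ and $\Str(J)$ sits inside $G$ as the desired Levi. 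Your twisting argument is a legitimate alternative: it trades Loos's general machinery and the Demazure step for a once-and-for-all split identification over $\ZZ$ plus \'etale descent. The paper's route is shorter and uniform in $J$, but leans on \cite{Lo78} being available over the given base; yours is more hands-on and makes the dependence on the split case explicit, at the cost of having to pin down the $\Aut(J_0)$-equivariant integral isomorphism $L_0\cong\Str(J_0)$, which you correctly flag as the delicate point.
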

\begin{proof}
By~\cite[Theorem~4.6 and Lemma~4.11]{Lo78} for any Jordan algebra $J$
the group $\Aut((J,\,J))$
is isomorphic to a Levi subgroup of a parabolic subgroup $P$ of a reductive
group $\PG(J)$ (not necessarily connected; the definition of a parabolic subgroup
extends appropriately). Moreover, $\PG(J)\cong \Aut(\PG(J)/P)$.
If $J$ is a $27$-dimensional exceptional Jordan algebra, i.e., an Albert algebra,
the group $\PG(J)$ is of type $E_7$ and $P$ is a parabolic subgroup of type $P_7$.
Let $G$ be the corresponding adjoint group of type $E_7$.
Then by~\cite[Th\'er\`eme 1]{Dem77} we have $\Aut(\PG(J)/P)\cong\Aut(G)\cong G$.
Hence $\Aut((J,\,J))$
is isomorphic to a Levi subgroup of a parabolic subgroup $P$ of type $P_7$
in $G$.
\end{proof}

\section{Cubic Jordan algebras and the first Tits construction}

A \emph{cubic map} on a projective $R$-module $V$ consists of a function
$N\colon V\to R$ and its \emph{partial polarization}
$\partial N\colon V\times V\to R$ such that $\partial N(x,\,y)$ is quadratic
in $x$ and linear in $y$, and $N$ is cubic in the following sense:
\begin{itemize}
\item $N(tx)=t^3N(x)$ for all $t\in R$, $x\in V$;
\item $N(x+y)=N(x)+\partial N(x,\,y)+\partial N(y,\,x)+N(y)$ for all $x,\,y\in V$.
\end{itemize}
These data allow to extend $N$ to $V_S=V\otimes_RS$ for any ring extension
$S$ of $R$.

A \emph{cubic Jordan algebra} is a projective module $J$ equipped with a cubic
form $N$, quadratic map $\#\colon J\to J$ and an element $1\in J$ such that
for any extension $S/R$
\begin{itemize}
\item $(x^\#)^\#=N(x)x$ for all $x\in J_S$;
\item $1^\#=1$; $N(1)=1$;
\item $T(x^\#,\,y)=\delta N(x,\,y)$ for all $x,\,y\in J_S$;
\item $1\times x=T(x)1-x$ for all $x\in J_S$,
\end{itemize}
where $\times$ is the linearization of $\#$, $T(x)=\partial N(1,\,x)$,
$T(x,\,y)=T(x)T(y)-N(1,\,x,\,y)$, $N(x,\,y,\,z)$ is the linearization
of $\partial N$.

There is a natural structure of a quadratic Jordan algebra on $J$ given by
the formula
$$
U_xy=T(x,\,y)x-x^\#\times y.
$$

Any associative algebra $A$ of degree $3$ over $R$ (say, commutative \'etale
cubic algebra or an Azumaya algebra of rank $9$) can be naturally considered as
a cubic Jordan algebra, with $N$ being the norm, $T$ being the trace, and
$x^\#$ being the adjoint element to $x$.

Moreover, given an invertible scalar $\lambda\in R^\times$, one can equip
the direct sum $A\oplus A\oplus A$ with the structure of a cubic Jordan
algebra in the following way (which is called the \emph{first Tits construction}):
\begin{align*}
&1=(1,\,0,\,0);\\
&N(a_0,\,a_1,\,a_2)=N(a_0)+\lambda N(a_1)+\lambda^{-1} N(a_2)-T(a_0a_1a_2);\\
&(a_0,\,a_1,\,a_2)^\#=(a_0^\#-a_1a_2,\,\lambda^{-1}a_2^\#-a_0a_1,\,
\lambda a_1^\#-a_2a_0).
\end{align*}

Now we state a transitivity result (borrowed from
\cite[Proof of Theorem~4.8]{PeR}) which is crucial in what follows.

\begin{lem}\label{lem:trans}
Let $E$ be a cubuc \'etale extension of $R$, $A$ is the cubic Jordan algebra
obtained by the first Tits construction from $E$, $y$ be an invertible element
of $E$ considered as a subalgebra of $A$. Then $y$ lies in the orbit of $1$
under the action of subgroup of $\Str(A)(R)$ generated by $\Gm(R)$ and elements
of the form $U_x$, $x$ is an invertible element of $A$.
\end{lem}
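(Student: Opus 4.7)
The plan is to establish Lemma \ref{lem:trans} by explicit computation inside the first Tits construction. Write $A = E \oplus E \oplus E$ with the operations defined above, and recall that for any invertible $x \in A$ the $U$-operator is
\[
U_x z = T(x, z)\,x - x^\# \times z.
\]
The subgroup we are interested in is generated by $\Gm(R)$ and all such $U_x$; the goal is to exhibit, for a given $y \in E^\times$, an explicit element of this subgroup moving $1 = (1, 0, 0)$ to $(y, 0, 0)$.

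\textbf{Step 1.} Consider $w := (0, 1, 0)$. Since $N(w) = \lambda \in R^\times$, it is invertible in $A$. Using the explicit formulas $N(a_0, a_1, a_2) = N(a_0) + \lambda N(a_1) + \lambda^{-1} N(a_2) - T(a_0 a_1 a_2)$ and the Tits formula for $\#$, one verifies that $U_w$ cyclically permutes the three $E$-summands of $A$, namely
\[
U_w((a, 0, 0)) = (0, 0, \lambda a), \quad U_w((0, 0, c)) = (0, c, 0), \quad U_w((0, b, 0)) = (\lambda b, 0, 0).
\]
\textbf{Step 2.} For an invertible $y_0 \in E^\times$, set $w' := (0, y_0, 0)$; then $N(w') = \lambda N_E(y_0) \in R^\times$, and a parallel calculation yields
\[
U_{w'}((1, 0, 0)) = (0, 0, \lambda y_0^\#).
\]
\textbf{Step 3.} Combining Steps 1 and 2,
\[
U_w^2 \circ U_{w'}\bigl((1, 0, 0)\bigr) = (\lambda^2 y_0^\#,\, 0,\, 0).
\]
Since $E$ is étale cubic and $y_0 \in E^\times$, the identity $y \cdot y^\# = N_E(y)$ gives $y_0^\# = N_E(y_0)\, y_0^{-1}$. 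Multiplying by the invertible scalar $\lambda^{-2} N_E(y_0)^{-1} \in \Gm(R)$ produces $(y_0^{-1}, 0, 0)$ in the orbit of $1$. Finally, specializing $y_0 := y^{-1}$ gives $(y, 0, 0)$, as required.

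The main obstacle will be the bookkeeping in Steps 1 and 2. Evaluating $T(x, z) = T(x) T(z) - N(1, x, z)$ requires extracting the $stu$-coefficient of $N$ applied to $s \cdot 1 + tx + uz$ from the explicit formula above, and computing $x^\# \times z$ means applying the linearization $(x + z)^\# - x^\# - z^\#$ coordinate by coordinate, keeping track of the cross-term $-T(a_0 a_1 a_2)$ in $N$ and invoking the cubic identities $(a^\#)^\# = N(a)\,a$ and $1 \times a = T(a) \cdot 1 - a$ in $E$. These calculations are routine once the conventions are fixed, but they must be carried out carefully in each of the three components, and the cyclic pattern of Step 1 is what makes the argument close up cleanly.
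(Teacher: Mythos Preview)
Your proof is correct and follows essentially the same approach as the paper: both exploit explicit $U$-operators supported on the second and third summands of $A=E\oplus E\oplus E$ to cycle an element back into the first summand up to a scalar. The paper's version is slightly more economical, observing directly that $U_{(0,0,1)}\,U_{(0,y,0)}\,(y,0,0)=N(y)\cdot 1$, which avoids the detour through $y_0=y^{-1}$ and the extra application of $U_w$.
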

\begin{proof}
As an element of $A$ $y$ equals $(y,\,0,\,0)$. Now a direct calculation
shows that
$$
U_{(0,\,0,\,1)}U_{(0,\,y,\,0)}y=N(y)1.
$$
\end{proof}

Over a field, Jordan algebras that can be obtained by the first Tits
construction can be characterized in terms of cohomological invariants.
Namely, to each $J$ one associates a $3$-fold Pfister form $\pi_3(J)$,
and $J$ is of the first Tits construction if and only if $\pi_3(J)$ is
hyperbolic (see \cite[Theorem~4.10]{Pe}). Another equivalent description is
that $J$ splits over a cubic extension of the base field. If the characteristic of the base field
is distinct from $2$, $\pi_3$ is equivalent to the cohomological $f_3$ invariant,
$$
f_3:H^1_{\et}(-,F_4)\to H^3(-,\mu_2).
$$

\section{Springer form}

From now on $J$ is a $27$-dimensional cubic Jordan algebra over $R$.

Let $E$ be a cubic \'etale subalgebra of $J$. Denote by $E^\perp$ the
orthogonal complement to $E$ in $J$ with respect to the bilinear form
$T$ (it exists for the restriction of $T$ to $E$ is non-degenerate); it is a
projective $R$-module of rank $24$. It is shown in \cite[Proposition~2.1]{PeR}
that the operation
\begin{align*}
&E\times E^\perp\to E^\perp;\\
&(a,\,x)\mapsto -a\times x
\end{align*}
equips $E^\perp$ with a structure of $E$-module compatible with its $R$-module structure. Moreover, if we write
$$
x^\#=q_E(x)+r_E(x),\,q_E(x)\in E^\perp,\,r_E(x)\in E,
$$
then $q_E$ is a quadratic form on $E^\perp$, which is nondegenerate as one
can check over a covering of $R$ splitting $J$. This form is called the
\emph{Springer form} with respect to $E$.

The following lemma relates the Springer form and subalgebras of $J$.

\begin{lem}\label{lem:Springer}
Let $v$ be an element of $E^\perp$ such that $q_E(v)=0$ and $v$ is invertible
in $J$. Then $v$ is contained in a subalgebra of $J$ obtained by the first Tits
construction from $E$.
\end{lem}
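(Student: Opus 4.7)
The plan is to take $w := v^\#$ as a second generator and exhibit the $R$-submodule $B := E + E \cdot v + E \cdot w$ (where $E \cdot x := -E \times x$ is the $E$-action on $E^\perp$ recalled above) as a subalgebra of $J$ isomorphic to the first Tits construction from $E$ with parameter $\lambda := N(v)$.

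From the hypothesis $q_E(v) = 0$ together with the axiom $(v^\#)^\# = N(v)\, v$ and the invertibility of $v$ (so $\lambda \in R^\times$), I would deduce that $w \in E^\perp$, $w^\# = \lambda v$, $N(w) = \lambda^2$, and in particular $w$ is invertible. Since $v$ and $w$ are invertible elements of $E^\perp$, they generate free $E$-submodules of $E^\perp$ of rank~$3$; checking these submodules are in direct sum makes $B$ free of rank~$9$. I would then define
\[
\phi \colon E \oplus E \oplus E \longrightarrow J, \qquad \phi(a_0, a_1, a_2) := a_0 + a_1 \cdot v + \lambda^{-1} a_2 \cdot w,
\]
with the source equipped with the first Tits construction structure at parameter $\lambda$, and verify that $\phi$ is a morphism of cubic Jordan algebras whose image is $B$. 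The relation $v = \phi(0,1,0)$ would then conclude.

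The bulk of the argument is checking that $\phi$ respects the unit, the norm $N$, and the adjoint $\#$. Compatibility with $1$ is immediate from $\phi(1,0,0) = 1$. For $N$, trilinearization produces the diagonal contributions $N_E(a_0) + \lambda N_E(a_1) + \lambda^{-1} N_E(a_2)$ together with a mixed contribution which should reduce to $-T_E(a_0 a_1 a_2)$, matching the first Tits norm formula. For $\#$, bilinearization reduces everything to the identities
\[
(a \cdot v)^\# = a^\# \cdot w, \qquad (a \cdot w)^\# = \lambda \cdot a^\# \cdot v, \qquad (a \cdot v) \times (b \cdot w) = -ab,
\]
for $a, b \in E$, together with the standard behavior of $\#$ on $E$ itself. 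The first identity at $a = 1$ encodes precisely the hypothesis $q_E(v) = 0$; the remaining cases should follow by $E$-linearization combined with $(v^\#)^\# = N(v) v$ and the quadraticity of $\#$.

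The main obstacle I expect is the verification of these three $\#$-identities in the mixed $E$--$E^\perp$ setting. They are in principle consequences of the Springer decomposition of $\#$ specialized under the hypothesis $q_E(v) = 0$, but the bookkeeping needed to propagate the vanishing of $q_E(v)$ along the $E$-module structure $e \cdot x = -e \times x$ --- and to recognize the resulting $\#$-formula as the first Tits construction rather than a more general second Tits construction --- requires a careful combination of the cubic Jordan identities applied to sums of elements of $E$ and $E^\perp$.
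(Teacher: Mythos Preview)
Your proposal is correct and follows exactly the same route as the paper. The embedding $\phi(a_0,a_1,a_2)=a_0+a_1\cdot v+\lambda^{-1}a_2\cdot w$ you write down is, once you unfold $a\cdot x=-a\times x$, $w=v^\#$, and $\lambda=N(v)$, precisely the map
\[
(a_0,a_1,a_2)\longmapsto a_0-a_1\times v-N(v)^{-1}a_2\times v^\#
\]
that the paper quotes from \cite[Proof of Proposition~2.2]{PeR}; the paper simply outsources to that reference the verification you sketch.
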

\begin{proof}
It is shown in \cite[Proof of Proposition~2.2]{PeR} that the embedding
$$
(a_0,\,a_1,\,a_2)\mapsto a_0-a_1\times v-N(v)^{-1}a_2\times v^\#
$$
defines a subalgebra desired.
\end{proof}

Recall that the \'etale algebras of degree $n$ are classified by $\HH^1(R,\,S_n)$,
where $S_n$ is the symmetric group in $n$ letters. The sign map $S_n\to S_2$
induces a map
$$
\HH^1(R,\,S_n)\to\HH^1(R,\,S_2)
$$
that associates to any \'etale algebra $E$ a quadratic \'etale algebra
$\delta(E)$ called the \emph{discriminant} of $E$. The norm $N_{\delta(E)}$
is a quadratic form of rank $2$. We will use later on
the analog of the Grothendieck---Serre conjecture for quadratic \'etale algebras; it follows,
for example, from~\cite[Corollaire 6.1.14]{EGA2}.

Over a field, the Springer form can be computed explicitly in terms of
$\pi_3(J)$ and $\delta(E)$. We will need the following particular case:

\begin{lem}\label{lem:Discr}
Let $J$ be a Jordan algebra over a field $K$ with $\pi_3(J)=0$. Then
$$
q_E={N_{\delta(E)}}_E\perp {\bf h}_E\perp {\bf h}_E\perp {\bf h}_E,
$$
${\bf h}$ stands for the hyperbolic form of rank $2$.
\end{lem}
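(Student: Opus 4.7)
My plan is to identify the Witt class of $q_E$ in the Witt ring $W(E)$ by computing its classical invariants, and then use these to pin down the isomorphism class of the rank-$8$ form.

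The first step is to establish that the discriminant of $q_E$ equals the class of $\delta(E)_E$ in $E^\times/E^{\times 2}$. This identity does not use the hypothesis $\pi_3(J)=0$ and can be verified by descent: I would base-change to an algebraic closure of $K$, where $J$ becomes the split Albert algebra and $E$ decomposes as $K\times K\times K$; a direct computation in the split setting gives the identity, and faithfully flat descent transfers it back to $E$.

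The second step is to show that $q_E$ has Witt index at least $3$ over $E$. Since $\pi_3(J)=0$, the algebra $J$ is of first Tits construction, so $J\cong J(A,\lambda)$ for some associative degree-$3$ algebra $A$, and in particular $J$ is reduced. Concretely, I would exhibit three pairwise orthogonal hyperbolic planes in $(E^\perp,q_E)$ by using Lemma~\ref{lem:Springer} in reverse: first Tits subalgebras of $J$ containing $E$ supply invertible elements of $E^\perp$ with $q_E$-value zero, and by varying the subalgebra I would produce a totally isotropic subspace of $E$-rank $3$. This gives a decomposition $q_E\cong q' \perp \mathbf{h}_E\perp\mathbf{h}_E\perp\mathbf{h}_E$ for some rank-$2$ form $q'$ over $E$. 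Combining with the discriminant computation and the observation that $N_{\delta(E)_E}$ represents~$1$, we conclude $q'\cong N_{\delta(E)_E}$.

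The principal obstacle I anticipate is the Witt-index bound in the second step---namely, explicitly exhibiting enough isotropic vectors in $E^\perp$ that can be paired into three mutually orthogonal hyperbolic planes. An alternative, and likely cleaner, route is to invoke the general formula expressing $[q_E]\in W(E)$ in terms of $\pi_3(J)_E$ and $[N_{\delta(E)_E}]$ (available in the work of Petersson and Racine) and simply specialise it at $\pi_3(J)=0$.
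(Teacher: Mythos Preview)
The alternative you mention at the end---citing the Petersson--Racine formula for $q_E$ and specialising at $\pi_3(J)=0$---is exactly what the paper does: its entire proof is a one-line reference to \cite[Theorem~3.2]{PeR}. Your primary approach, however, has gaps in both steps.

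In step one, base-change to $\bar K$ cannot establish an equality of discriminant classes. Over $\bar K$ both the discriminant of $q_E$ and the class of $\delta(E)_E$ become trivial in $E_{\bar K}^\times/E_{\bar K}^{\times 2}$, so the identity there carries no information; faithfully flat descent does not recover equalities of cohomology classes in a group that vanishes on the cover. You would need a genuinely functorial formula, not a verification at a single geometric point.

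In step two, the claim ``in particular $J$ is reduced'' is false: first Tits constructions $J(A,\lambda)$ with $A$ a degree-$3$ division algebra and $\lambda$ not a norm from $A$ are Albert \emph{division} algebras, and they still satisfy $\pi_3=0$ (indeed, in the proof of Theorem~\ref{th:main} the interesting case is precisely when $J_K$ is a division algebra). More importantly, your plan to produce isotropic vectors via first Tits subalgebras of $J$ containing $E$ is circular. By \cite[Proposition~2.2]{PeR}, which underlies Lemma~\ref{lem:Springer}, such $9$-dimensional subalgebras are in bijection with invertible $v\in E^\perp$ satisfying $q_E(v)=0$; their existence is \emph{equivalent} to isotropy of $q_E$, not an independent route to it. So the obstacle you flag is essential rather than technical, and the clean path is indeed your alternative.
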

\begin{proof}
Follows from \cite[Theorem~3.2]{PeR}.
\end{proof}

We will also use the following standard result.

\begin{lem}\label{lem:Conj}
Let $J$ be a Jordan algebra over an algebraically closed field $F$. Then
any two cubic \'etale subalgebras $E$ and $E'$ of $J$ are conjugate by an
element of $\Aut(J)(F)$.
\end{lem}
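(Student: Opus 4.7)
The plan is to reduce the statement to the classical fact that over an algebraically closed field $F_4(F)$ acts transitively on frames of primitive orthogonal idempotents of the split Albert algebra.

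Since $F$ is algebraically closed, $J$ is isomorphic to the split $27$-dimensional exceptional Jordan algebra, and every cubic \'etale $F$-algebra is isomorphic to $F\times F\times F$. Hence a cubic \'etale subalgebra $E\subseteq J$ is the $F$-linear span of a uniquely determined unordered triple $\{e_1,e_2,e_3\}$ of pairwise orthogonal primitive idempotents of $J$ with $e_1+e_2+e_3=1$, and $E'\subseteq J$ is similarly determined by a frame $\{e_1',e_2',e_3'\}$. Thus the lemma follows once we show that $\Aut(J)(F)$ acts transitively on such frames.

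I would prove transitivity in two steps. First, $\Aut(J)(F)$ acts transitively on primitive idempotents of $J$: over an algebraically closed field the variety of primitive idempotents of the split Albert algebra is the Cayley plane, which is a single closed $F_4$-orbit in $\mathbb{P}(J)$. Second, fix a primitive idempotent $e$; its Peirce $0$-component $J_0(e)$ is the $10$-dimensional Jordan algebra of a nondegenerate quadratic form, and the stabilizer of $e$ in $\Aut(J)$ is isomorphic to $\Spin_9$ and surjects onto the identity component of $\Aut(J_0(e))$. Selecting an orthogonal primitive idempotent $e_2$ (which then forces $e_3=1-e-e_2$) amounts to choosing a primitive idempotent in $J_0(e)$; over $F=\bar{F}$ these correspond to points on the projective quadric of trace-$\tfrac12$ isotropic vectors in $J_0(e)$, on which $\SO$ acts transitively. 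Combining the two steps yields the desired transitivity on frames, hence on cubic \'etale subalgebras.

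The main obstacle is really just bookkeeping: identifying the stabilizer of a primitive idempotent with $\Spin_9$ and describing its action on the Peirce $0$-component requires some care, especially in small characteristics. Fortunately this material is completely classical (cf.\ Springer--Veldkamp or Jacobson's book on Jordan algebras); alternatively, one can establish the statement over an algebraically closed field of characteristic $0$ first, and then deduce it uniformly by a smoothness/specialization argument applied to the scheme parametrizing frames, which is a homogeneous space under $\Aut(J)$ modulo a smooth subgroup of the expected dimension.
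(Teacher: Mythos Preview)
Your argument is correct, but the paper takes a shorter and rather different route. Instead of analyzing the $F_4$-action directly via the Cayley plane and the $\Spin_9$ stabilizer of a primitive idempotent, the paper invokes Loos's general transitivity theorem for frames in Jordan pairs \cite[Theorem~17.1]{Lo75}: it produces an element $g\in\Str(J)(F)=\Aut((J,J))(F)$ sending the frame $(e_1,e_2,e_3)$ of $E$ to the frame $(e_1',e_2',e_3')$ of $E'$, and then simply observes that such a $g$ maps $1=\sum e_i$ to $1=\sum e_i'$, hence lies in $\Aut(J)(F)$.

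The trade-off is clear. The paper's proof is essentially two lines and is uniform in all characteristics because the Jordan-pair machinery it relies on is already characteristic-free; it also reuses the identification $\Str(J)=\Aut((J,J))$ established earlier in the paper. Your approach is more self-contained in the sense that it avoids Jordan pairs entirely and stays within the classical structure theory of the Albert algebra, but it requires checking several facts (homogeneity of the Cayley plane, the $\Spin_9$ stabilizer, its action on the Peirce $0$-component) and, as you note yourself, some care in characteristic~$2$ where the description of idempotents in a spin factor is less straightforward. Your suggested fallback via a smoothness/specialization argument would work, but at that point the paper's one-line appeal to \cite{Lo75} is clearly more economical.
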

\begin{proof}
Present $E$ as $Fe_1\oplus Fe_2\oplus Fe_3$, where $e_i$ are idempotents whose
sum is $1$; do the same with $E'$. By \cite[Theorem~17.1]{Lo75} there exists
an element $g\in\Str(J)(F)$ such that $ge_i=e'_i$. But then $g$ stabilizes $1$,
hence belongs to $\Aut(J)(F)$.
\end{proof}

\section{Proof of Theorem~\ref{th:main}}

\begin{proof}[Proof of Theorem~\ref{th:main}]
Set $H=\Aut(J)$. It is a simple group of type $F_4$ over $R$.
We may assume that $H_K$ is not split, otherwise the result follows from
\cite[Theorem~1.0.1]{Pa-newpur}. Let $J$ be the Jordan algebra
corredponding to $H$; we have to show that if $J'$ is a twisted form of $J$
such that $J'_K\simeq J_K$ then $J'\simeq J$. Set $L=\Str(J)$; then $L$ is
a Levi subgroup of a parabolic subgroup of type $P_7$ of an adjoint simple
group scheme $G$ of type $E_7$ by Lemma~\ref{lem:Str}. By~\cite[Exp. XXVI Cor. 5.10 (i)]{SGA} the map
$$
\HH^1_{\et}(R,\,L)\to\HH^1_{\et}(K,\,G)
$$
is injective. Since $G$ is isotropic, by \cite[Theorem~1.0.1]{Pa-newpur}
the map
$$
\HH^1_{\et}(R,\,G)\to\HH^1_{\et}(K,\,G)
$$
has trivial kernel, and so does the map
$$
\HH^1_{\et}(R,\,L)\to\HH^1_{\et}(K,\,L).
$$

But $(J'_K,\,J'_K)\simeq (J_K,\,J_K)$, therefore $(J',\,J')\simeq (J,\,J)$,
that is $J'$ is isomorphic to $J^{(y)}$ for some invertible $y\in J$. It
remains to show that $y$ lies in the orbit of $1$ under the action of
$\Str(J)(R)$.

Present the quotient of $R$ by its Jacobson radical as a direct product of
the residue fields $\prod k_i$. An argument in \cite[Proof of Theorem~4.8]{PeR}
shows that for each $i$ one can find an invertible element $v_i\in J_{k_i}$ such
that the discriminant of the generic polynomial of $U_{v_i}y_{k_i}$ is nonzero.
Lifting $v_i$ to an element $v\in J$ and changing $y$ to $U_vy$ we may assume
that the generic polynomial $f(T)\in R[T]$ of $y$ has the property that
$R[T]/(f(T))$ is an \'etale extension of $R$. In other words, we may assume
that $y$ generates a cubic \'etale subalgebra $E$ in $J$.

Note that $E_K$ is a cubic field extension of K; otherwise $J_K$ is reduced,
hence split, for $\pi_3(J_K)=0$ (see \cite[Theorem~4.10]{Pe}). Consider the
form
$$
q={N_{\delta(E)}}_E\perp{\bf h}_E\perp{\bf h}_E\perp{\bf h}_E;
$$
then by Lemma~\ref{lem:Discr} $q_K=q_{E_K}$. By the analog of the
Grothendieck---Serre conjecture for \'etale quadratic algebras, $q$ and $q_E$ have the same discriminant. So $q_E$ is a twisted
form of $q$ given by a cocycle $\xi\in\HH^1(E,\,\SO(q))$. Now $\xi_K$ is trivial,
and \cite[Theorem~1.0.1]{Pa-newpur} imply that $\xi$ is trivial
itself, that is $q_E=q$. In particular, $q_E$  is isotropic. Let us show that
there is an \emph{invertible} element $v$ in $J$ such that $q_E(v)=0$.

The projective quadric over $E$ defined by $q_E$ is isotropic, hence has an open
subscheme $U\simeq{\mathbb A}_E^n$. Denote by $U'$ the open subscheme of $R_{E/R}(U)$
consisting of invertible elements. It suffices to show that $U'(k_i)$ is
non-empty for each $i$, or, since the condition on $R$ implies that $k_i$
is infinite, that $U'(\bar k_i)$ is non-empty.

But $J_{\bar k_i}$ splits, and, in particular, it is obtained by a first Tits construction
from a split Jordan algebra of $3\times 3$ matrices over $\bar k_i$. The diagonal
matrices in this matrix algebra constitute a cubic \'etale subalgebra of $J_{\bar k_i}$.
By Lemma~\ref{lem:Conj} we may assume that this \'etale subalgebra coincides
with $E_{\bar k_i}$. By~\cite[Proposition~2.2]{PeR} there exists an invertible
element $v_i\in E^{\perp}_{\bar k_i}$ such that $q_{E_{\bar k_i}}(v_i)=0$. Thus
the scheme of invertile elements intersects the quadric over $\bar k_i$, hence,
$U'(\bar k_i)$ is non-empty.

Finally, Lemma~\ref{lem:Springer} and Lemma~\ref{lem:trans} show that $y$
belongs to the orbit of $1$ under the group generated by $\Gm(R)$ and
elements of the form $U_x$. So $J'\simeq J^{(y)}\simeq J$, and the proof is
completed.
\end{proof}

The authors are heartily grateful to Ivan Panin, who introduced them to the subject and
provided inspiring comments during the course of the work.

\end{document}